\date{}
\renewcommand{\uppercasenonmath}[1]{}
\theoremstyle{plain}
\newtheorem{theorem}{Theorem}[section]
\newtheorem{proposition}[theorem]{Proposition}
\newtheorem{corollary}[theorem]{Corollary}
\theoremstyle{definition}
\newtheorem{example}[theorem]{Example}
\theoremstyle{definition}
\theoremstyle{remark}
\newcommand{\pf}{\noindent\begin {proof}}
\newcommand{\epf}{\end{proof}}
\newcommand{\Ker}{\mbox{\rm Ker}}
\newcommand{\Ext}{\mbox{\rm Ext}}
\newcommand{\Hom}{\mbox{\rm Hom}}
\newcommand{\Tor}{\mbox{\rm Tor}}
\newcommand{\Prufer}{Pr\"{u}fer}
\newcommand{\Pj}{\mathcal{P}}
\newcommand{\C}{\mathcal{C}}
\newcommand{\g}{\mathcal{G}}
\newcommand{\G}{\mathcal{G}}
\newcommand{\St}{\mathcal{S}}
\newcommand{\FPR}{\mathcal{FPR}}
\newcommand{\T}{\mathcal{T}}
\newcommand{\Ss}{\mathcal{S}}
\newcommand{\Prod}{\mathrm{Prod}}
\newcommand{\Add}{\mathrm{Add}}
\newcommand{\Id}{\mathrm{Id}}
\newcommand{\A}{\mathcal{A}}
\newcommand{\B}{\mathcal{B}}
\newcommand{\Proj}{\mathcal{P}}
\def\GV{{\rm GV}}
\def\Hom{{\rm Hom}}
\def\Ext{{\rm Ext}}
\def\Tor{{\rm Tor}}
\def\Ker{{\rm Ker}}
\def\Ann{{\rm Ann}}
\def\GV{{\rm GV}}
\def\Max{{\rm Max}}
\def\DW{{\rm DW}}
\def\Add{{\rm Add}}
\def\gld{{\rm gld}}
\def\fPD{{\rm fPD}}
\def\FPD{{\rm FPD}}
\def\VAss{{\rm VAss}}
\def\Spec{{\rm Spec}}
\def\pd{{\rm pd}}
\def\V{{\rm V}}
\def\m{{\frak m}}
\def\grade{{\rm grade}}
\def\Mod{{\rm Mod}}
\def\p{{\frak p}}
\def\DQ{{\rm DQ}}
\def\LPVD{{\rm LPVD}}
\def\id{{\rm id}}
\begin{document}
\begin{center}
{\large  \bf  The small finitistic dimensions of commutative rings}

\vspace{0.5cm}   Xiaolei Zhang$^{a}$, Fanggui Wang$^b$\\

{\footnotesize a.\ School of Mathematics and Statistics, Shandong University of Technology, Zibo 255000, China\\
b.\ School of Mathematical Sciences, Sichuan Normal University, Chengdu 610068, China\\
E-mail: zxlrghj@163.com\\}
\end{center}

\bigskip
\centerline { \bf  Abstract} Let $R$ be a commutative ring with identity.  The small finitistic dimension $\fPD(R)$ of $R$ is defined  to be the supremum of projective dimensions of $R$-modules with finite projective resolutions.  In this paper, we characterize a ring $R$ with $\fPD(R)\leq n$ using  finitely generated semi-regular ideals, tilting modules, cotilting modules of cofinite type and  vaguely associated  prime ideals. As an application,  we obtain that  if $R$ is a Noetherian ring, then $\fPD(R)= \sup\{\grade(\m,R)|\m\in \Max(R)\}$  where $\grade(\m,R)$ is the grade of $\m$ on $R$ . We also  show that a ring $R$ satisfies $\fPD(R)\leq 1$ if and only if $R$ is a $\DW$ ring. As applications, we show that the small finitistic dimensions of strong \Prufer\ rings and $\LPVD$s are at most one. Moreover, for any given $n\in \mathbb{N}$, we obtain a total ring of quotients $R$ satisfying $\fPD(R)=n$.
\bigskip
\leftskip10truemm \rightskip10truemm \noindent
\vbox to 0.3cm{}\\
{\it Key Words:} small finitistic dimension; tilting module; Noetherian ring; $\DW$ ring; \Prufer\ ring.\\
{\it 2010 Mathematics Subject Classification:} 13D05; 13D30.

\leftskip0truemm \rightskip0truemm
\bigskip
\section{introduction}
Throughout this paper, $R$ is a commutative ring with identity and Mod-$R$ is the category of all $R$-modules.
Let $M$ be an $R$-module, we use $\pd_RM$  to denote the projective dimension of $M$ over $R$. The global dimension of $R$, denoted by $\gld(R)$, is defined to be the supremum of the projective dimensions of all $R$-modules. 
It is well-known that a Noetherian local ring $R$ has finite global dimension if and only if $R$ is a regular local ring.
So Noetherian local rings with nonzero zero divisors have infinite global dimensions. This motivates the  definition of the so called finitistic dimension of a ring $R$. The big finitistic dimension of $R$, denoted by $\FPD(R)$, is defined to be the supremum of the projective dimensions of $R$-modules $M$ with finite projective dimensions. Bass \cite{B60} proved that a ring $R$ is perfect if and only if $\FPD(R)=0$.  For a  Noetherian ring $R$, Raynaud and Gruson \cite{RG71} showed that the big finitistic dimension  $\FPD(R)$ of $R$ coincides with the Krull dimension $K.\dim(R)$ of $R$.  

For a ring $R$, we denote mod-$R$ to be the full subcategory of all $R$-modules that have finitely generated projective resolutions, i.e. the $R$-modules $M$  fitting into an exact sequence
$$\dots\rightarrow P_m\rightarrow P_{m-1}\rightarrow \dots\rightarrow P_1\rightarrow P_0\rightarrow M\rightarrow 0$$ where each $P_i$ is a finitely generated projective $R$-module. Let $M$ be an $R$-module. Then $M$ is said to have a finite projective resolution, denoted by $M\in\FPR$, if  there exist an integer $n$ and an exact sequence
$$0\rightarrow P_n\rightarrow P_{n-1}\rightarrow \dots\rightarrow P_1\rightarrow P_0\rightarrow M\rightarrow 0$$
with each $P_i$  finitely generated projective. We denote $\Proj^{\leq n}$  to be the class of $R$-modules with projective dimensions at most $n$ in  $\FPR$. The small finitistic dimension of $R$, denoted by $\fPD(R)$, is defined to be the supremum of the projective dimensions of $R$-modules in $\FPR$. Clearly, $\fPD(R)\leq n$ if and only if $\FPR=\Proj^{\leq n}$, and $\fPD(R)\leq \FPD(R)\leq \gld(R)$ for any ring $R$. 
In case $R$ is commutative Noetherian local, Auslander and Buchweitz showed that the small finitistic dimension $\fPD(R)$ of $R$  coincides with the depth of $R$ (see \cite{AB58}).  In 2020, Wang et al. \cite{fkxs20} proved that a ring $R$ is a $\DQ$ ring if and only if $\fPD(R)=0$. They also showed that if $\fPD(R)\leq 1$, then $R$ is a $\DW$ ring and then gave  examples of \Prufer\ rings with small finitistic dimensions larger than 1 getting a negative answer to the open question raised by  Cahen et al. \cite{CFFG14}.

In the present paper, we characterize the small finitistic dimension of a ring $R$ using some special finitely generated semi-regular ideals,  tilting modules,  cotilting modules of cofinite type and the vaguely associated  prime ideals of $R$ (see Theorem \ref{main}).  As an application,  we obtain that  if $R$ is a Noetherian ring, then $\fPD(R)= \sup\{\grade(\m,R)|\m\in \Max(R)\}$ where $\grade(\m,R)$ is  the grade of $\m$ on $R$ (see Proposition \ref{nof}). For little small finitistic dimensions, we rediscover the rings with $\fPD(R)=0$ (see Corollary \ref{DQ}) and then show that a ring $R$ is a $\DW$ ring if and only if $\fPD(R)\leq 1$ (see Corollary \ref{DW}).  As applications, we show that the small finitistic dimensions of strong \Prufer\ rings and $\LPVD$s are at most one (see Corollary \ref{spf} and Corollary \ref{lpvd}). Moreover, for any given $n\in \mathbb{N}$, we obtain examples of total rings of quotients with small finitistic dimensions $n$ which gives a deeper understanding of Cahen et al.'s open question \cite{CFFG14} (see Example \ref{exa-fpd-n}).

\section{Preliminaries}

In this section, we  recall the classifications of infinitely generated tilting modules over commutative rings developed by Hrbek and \v{S}\v{t}ov\'{i}\v{c}ek (see \cite{H16, HS19}). This theory is essential to our studies of the small finitistic dimensions of commutative rings.

Let $R$ be a commutative ring, $M$ an $R$-module.   $\Omega^{-i}(M)$ is denoted to be the $i$-th minimal cosyzygy of $M$. For a class $\St$ of $R$-modules, we always use the following notations:
$$\St^{\bot}=\{M\in \Mod\mbox{-}R|\Ext^n_R(S,M)=0, \forall S\in\St,\ \forall n\geq 1\},$$
$$^{\bot}\St=\{M\in \Mod\mbox{-}R|\Ext^n_R(M,S)=0, \forall S\in\St,\  \forall n\geq 1\},$$
$$\St^{\top}=\{M\in \Mod\mbox{-}R|\Tor_n^R(S,M)=0, \forall S\in\St,\  \forall n\geq 1\}.$$

Following from \cite{S75}, a filter $\G$ of ideals of $R$ is called a Gabriel topology provided that:
\begin{enumerate}
    \item if $I\in\G$ and $t\in R$, then $(I:t):=\{r\in R|tr\in I\}\in\G$,
    \item if $J$ is an ideal and $I\in\G$ such that $(J:t)\in \G$ for any $t\in I$, then $J\in\G$.
\end{enumerate}
A Gabriel topology is finitely generated if it has a basis of finitely generated ideals. We denote by $\G^f$ the set of all finitely generated ideals in $\G$. Obviously, a Gabriel topology is finitely generated if and only if it can be generated by $\G^f$.

Given a class $\C$ of $R$-modules, let SubLim($\C$) denote the smallest subclass of $\Mod$-$R$ containing $\C$ closed under direct limits and submodules. A prime ideal $\p$ of $R$ is called vaguely associated to an $R$-module $M$ if $R/\p$ is contained in SubLim($\{M\}$). Denote the spectrum of all vaguely associated primes of $M$ by $\VAss(M)$. The set of vaguely associated prime ideals is a general version of that of associated prime ideals over Noetherian rings (see \cite{H16}).

An $R$-module $T$ is said to be $n$-tilting for some $n\geq 0$, provided that it satisfies the following conditions:
\begin{enumerate}
    \item $\pd_RT\leq n$,
    \item $\Ext_R^i(T,T^{(\kappa)})=0$ for all $i>0$ and all cardinals $\kappa$,
    \item there are $r\geq 0$ and a long exact sequence
    $$0\rightarrow R\rightarrow T_0\rightarrow T_1\rightarrow \dots\rightarrow T_n\rightarrow 0$$
    where $T_i\in\Add(T)$ for all $0\leq i\leq r$, where $\Add(T)$ denotes the class of all direct summands of arbitrary direct sums of copies of $T$.
\end{enumerate}
A cotorsion pair $(\A,\B)$ is said to be induced by an $R$-module $M$ provided that  $\B=M^{\bot}$. A cotorsion pair $(\A,\B)$ induced by an $n$-tilting module is said to be an $n$-tilting cotorsion pair, and then the class $\B$ is said to be the $n$-tilting class. We suppress the index $n$- in the notation if we do not desire to specify the dimension bound on $T$. Two tilting modules are said to be equivalent provided that they induce the same tilting class. An $R$-module $M$ is called super finitely presented if it admits a projective resolution consisting of finitely generated projective modules. The following result, which can be implied by \cite[Theorem 4.2]{BJ13} and \cite[Theorem 13.26]{GT12}, shows that all tilting classes are induced by super finitely presented modules with bounded projective dimensions.

\begin{theorem}\label{f-tilt}
A class $\B$ of $R$-modules is $n$-tilting if and only if there is a set $\St$ of super finitely presented modules of projective dimension at most $n$ such that $\B=\St^{\bot}$.
\end{theorem}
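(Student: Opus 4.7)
The plan is to prove the two implications separately, leveraging standard machinery from the theory of cotorsion pairs generated by sets, plus the Bazzoni--Herbera finite-type theorem in the harder direction.

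For the direction ($\Leftarrow$): given a set $\St$ of strongly finitely presented modules with $\pd_R S \le n$ for every $S \in \St$, I would set $\B = \St^{\bot}$ and consider the cotorsion pair $({}^{\bot}\B, \B)$ generated by $\St$. Since $\St$ is a set, this cotorsion pair is complete by the Eklof--Trlifaj theorem, so every module admits a special $\B$-preenvelope. Iterating the preenvelope starting from $R$, I build an exact sequence $0 \to R \to T_0 \to T_1 \to \cdots \to T_n \to 0$ with each $T_i \in \B$ and each intermediate cokernel in ${}^{\bot}\B$; at the $n$-th step the last cokernel has projective dimension at most $n$ and lies in ${}^{\bot}\B$, hence is projective, so it can be absorbed into $T_n$. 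Taking $T = \bigoplus_{i=0}^{n} T_i$, one checks $(i)$ $\pd_R T \le n$ from the bound on $\pd_R S$ for $S \in \St$, $(ii)$ $\Ext_R^i(T, T^{(\kappa)}) = 0$ because $T \in {}^{\bot}\B$ and $T \in \B$, and $(iii)$ the required coresolution of $R$ is precisely the one just constructed. Hence $T$ is $n$-tilting with tilting class $T^{\bot} = \B$.

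For ($\Rightarrow$): suppose $T$ is $n$-tilting with tilting class $\B = T^{\bot}$. Let $\St_0$ denote the representative set of all strongly finitely presented modules $S$ with $\pd_R S \le n$ and $S \in {}^{\bot}\B$. The containment $\B \subseteq \St_0^{\bot}$ is immediate. The non-trivial containment $\St_0^{\bot} \subseteq \B$ is the content of the Bazzoni--Herbera ``finite type'' theorem for tilting classes, extended to arbitrary commutative rings. It proceeds by deconstructibility of ${}^{\bot}\B$: combining Hill's lemma with the fact that $\pd_R T \le n$ forces a bound on syzygies, one realises each module in ${}^{\bot}\B$ as a transfinite extension of members of $\St_0$. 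Then any $M \in \St_0^{\bot}$ is automatically $\Ext$-orthogonal to all of ${}^{\bot}\B$, so $M \in ({}^{\bot}\B)^{\bot} = \B$.

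The main obstacle is precisely this finite-type property, and in particular the upgrade from ``finitely presented of projective dimension $\le n$'' to ``strongly finitely presented of projective dimension $\le n$''. Over a non-coherent ring these notions genuinely diverge, and one must argue that the generating modules can be chosen to admit resolutions by \emph{finitely generated} projectives, not merely to be finitely presented; this is the substance of Bazzoni--Herbera's argument and its extension by \v{S}\v{t}ov\'{i}\v{c}ek. In line with the scope of the present paper, I would appeal directly to \cite[Theorem 4.2]{BJ13} and \cite[Theorem 13.26]{GT12} for this step rather than reproduce the deconstructibility machinery here.
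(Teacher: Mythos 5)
The paper offers no proof of this theorem: it is stated as a known result and attributed to \cite[Theorem 4.2]{BJ13} and \cite[Theorem 13.26]{GT12}, so your decision to defer the substantive finite-type direction ($\Rightarrow$) to exactly those references matches what the paper does, and the Eklof--Trlifaj sketch you add for ($\Leftarrow$) is the standard supplementary argument.

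One step of your sketch of ($\Leftarrow$) is wrong as written. After iterating special $\B$-preenvelopes you would obtain $0\to R\to T_0\to\cdots\to T_n\to A_{n+1}\to 0$ with $A_{n+1}\in{}^{\bot}\B$ of projective dimension at most $n$, but such a module need not be projective --- indeed any non-projective member of $\St$ itself lies in ${}^{\bot}\B$ and has $\pd\leq n$ --- so the last cokernel cannot simply be ``absorbed into $T_n$.'' The correct termination argument stops one step earlier: writing the construction as short exact sequences $0\to A_i\to T_i\to A_{i+1}\to 0$ with $A_0=R$, $T_i\in\B$ and $A_{i+1}\in{}^{\bot}\B$, dimension shifting in the second variable (using that $\Ext^j_R(A,T_i)=0$ for all $j\geq 1$ and all $A\in{}^{\bot}\B$, the cotorsion pair being hereditary) gives $\Ext^1_R(A,A_n)\cong\Ext^{n+1}_R(A,R)=0$ because $\pd_RA\leq n$; hence $A_n\in({}^{\bot}\B)^{\bot}=\B$ and one sets $T_n:=A_n$. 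With that repair, and the routine check that the resulting $T=\bigoplus_{i=0}^{n}T_i$ satisfies $T^{\bot}=\St^{\bot}$, your argument is correct and consistent with the sources the paper cites.
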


An $R$-module $C$ is said to be $n$-cotilting for some $n\geq 0$, provided that it satisfies the following conditions:
\begin{enumerate}
    \item $\id_RC\leq n$,
    \item $\Ext_R^i(C^{\kappa},C)=0$ for all $i>0$ and all cardinals $\kappa$,
    \item there are $r\geq 0$ and a long exact sequence
    $$0\rightarrow C_r\rightarrow\dots  \rightarrow C_1\rightarrow C_0\rightarrow  W\rightarrow 0$$
    where $W$ is an injective cogenerator in $R$-Mod and $C_i\in\Prod(C)$ for all $0\leq i\leq r$, where $\Prod(C)$ denotes the class of all direct summands of arbitrary direct products of copies of $C$.
\end{enumerate}
A cotorsion pair $(\A,\B)$ is said to be co-induced by an $R$-module $C$ provided that $\A=^{\bot}C$.  A cotorsion pair $(\A,\B)$ co-induced by an $n$-cotilting module is said to be an  $n$-cotilting cotorsion pair, and then the class $\A$ is said to be the $n$-cotilting class.  A class $\C$ of $R$-modules is of  cofinite type if there exists $n<\infty$ and a set $\St$ of super finitely presented modules of projective dimensions at most $n$ such that $\C=\Ss^{\top}$. An $R$-module $C$ is of cofinite type if the class $^{\bot}C$ of $R$-modules is of cofinite type.

Recall that a subcategory $\St$ of mod-$R$ is said to be resolving, if all finitely generated projective modules are contained in $\St$, and $\St$ is closed under extensions, direct summands, and and $A\in\St$ where there is an exact sequence $0\rightarrow A\rightarrow B\rightarrow C\rightarrow 0$ with $B,C\in\St$.

Over commutative rings, Hrbek and \v{S}\v{t}ov\'{i}\v{c}ek \cite{HS19} classified the tilting classes in terms of sequences of Gabriel topologies, cotilting classes of cofinite type and resolving subcategories of mod-$R$.

\begin{theorem}\cite[Theorem 6.2]{HS19}\label{corr}
Let $R$ be a commutative ring and $n\geq 0$. There are $1$-$1$ correspondences between the following collections:
\begin{enumerate}
    \item sequences $(\g_0,\g_1,\dots,\g_{n-1})$ of Gabriel topologies of finite type satisfying:\\
           $($a$)$ $\g_0\subseteq\g_1\subseteq\dots\subseteq\g_{n-1}$,\\
           $($b$)$ $\Ext_R^j(R/I,R)=0$ for all $I\in \g_i$, all $i=0,1,\dots,n-1$, all $j=0,1,\dots,i$,
    \item $n$-cotilting classes $\C$ in Mod-$R$ of cofinite type,
    \item $n$-tilting classes $\T$ in Mod-$R$,
    \item resolving subcategories of mod-$R$ consisting of modules of projective dimension at most $n$.

\end{enumerate}
The correspondences are given as follows:\\

    $(1)\rightarrow (2)$:  $(\g_0,\g_1,\dots,\g_{n-1})\mapsto \displaystyle\bigcap_{i=0}^{n-1}\displaystyle\bigcap_{I\in \g_i^{f}}\Ker\Ext_R^i(R/I,-)=\displaystyle\bigcap_{i=0}^{n-1}\displaystyle\bigcap_{I\in \g_i^{f}}(S_{I,i+1})^{\top}$

     $(1)\rightarrow (3)$: $(\g_0,\g_1,\dots,\g_{n-1})\mapsto \displaystyle\bigcap_{i=0}^{n-1}\displaystyle\bigcap_{I\in \g_i^{f}}\Ker\Tor^R_i(R/I,-)=\displaystyle\bigcap_{i=0}^{n-1}\displaystyle\bigcap_{I\in \g_i^{f}}(S_{I,i+1})^{\bot}$.\\

    $(1)\rightarrow (4)$: $(\g_0,\g_1,\dots,\g_{n-1})\mapsto \{M\in$ mod-$R| M$ is isomorphic to a summand of a

     \qquad\qquad finitely $\{R\}\cup\{S_{I,i+1}|I\in \g_i^{f}, i<n\}$-filtered module$\}$.\\
\end{theorem}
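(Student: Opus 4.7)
The plan is to establish the correspondences in the order $(2) \Leftrightarrow (3)$ and then $(1) \Leftrightarrow (3)$, the first being standard tilting-theoretic machinery and the second containing the substantive content. Throughout, I write $\mathcal{R}$ for a resolving subcategory of mod-$R$ whose modules have projective dimension at most $n$.

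For $(2) \Leftrightarrow (3)$, Theorem \ref{f-tilt} already states that every $n$-tilting class $\T$ has the form $\St^{\bot}$ for some set $\St$ of strongly finitely presented modules of projective dimension at most $n$. Starting from such $\T$, I would define $\mathcal{R}(\T)$ to be the smallest resolving subcategory of mod-$R$ containing $\St$; conversely, given $\mathcal{R}$, set $\T(\mathcal{R}) = \mathcal{R}^{\bot}$. The key point is that $\Ext_R^{\geq 1}(-,M)$ converts the short exact sequences defining the resolving operations into long exact sequences of $\Ext$, so $\St^{\bot} = \mathcal{R}(\T)^{\bot}$; that $\mathcal{R} \mapsto \mathcal{R}^{\bot}$ is injective on resolving subcategories of $\Proj^{\leq n}$ follows from a standard Ext-duality argument using bounded projective resolutions by finitely generated projectives.

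For the main correspondence $(1) \Leftrightarrow (3)$, the forward map is the one written in the statement: it sends $(\g_0, \ldots, \g_{n-1})$ to the smallest resolving subcategory $\mathcal{R}$ of mod-$R$ containing $\{R\} \cup \{S_{I,i+1} \mid I \in \g_i^{f},\ i < n\}$, and condition $(b)$ guarantees that every such $S_{I,i+1}$ has projective dimension at most $n$, keeping $\mathcal{R}$ inside $\Proj^{\leq n}$. The reverse map sends $\mathcal{R}$ to the sequence whose $\g_i$ is the filter of finite type generated by the finitely generated ideals $I$ with $S_{I,i+1} \in \mathcal{R}$.

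The hard step is to verify that this prescription actually yields a Gabriel topology and that the chain $\g_0 \subseteq \cdots \subseteq \g_{n-1}$ together with condition $(b)$ is forced by the closure properties of $\mathcal{R}$. The transitivity axiom of a Gabriel topology has to be extracted from closure under extensions applied to the short exact sequences
\[
0 \to R/(I:t) \to R/I \to R/(I+Rt) \to 0
\]
and their higher syzygy versions; the ascending inclusion $\g_i \subseteq \g_{i+1}$ follows from dimension shifting together with the fact that $\mathcal{R}$ contains all finitely generated projectives; and condition $(b)$ on $\Ext_R^j(R/I,R)$ translates directly into the projective dimension bound defining $\Proj^{\leq n}$. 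Once these structural properties are in hand, showing that the two assignments are mutually inverse amounts to proving that a resolving subcategory of mod-$R$ of bounded projective dimension is determined by the cyclic modules $R/I$ it contains together with their iterated syzygies, which can be reduced to a resolving-generation argument inside mod-$R$ combined with the finite-type hypothesis on the $\g_i$.
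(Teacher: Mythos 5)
This statement is quoted verbatim from \cite[Theorem 6.2]{HS19} and the paper offers no proof of it, so there is no internal argument to compare yours against; the only fair benchmark is the proof in Hrbek--\v{S}\v{t}ov\'{i}\v{c}ek itself. Measured against that, your proposal correctly identifies the architecture --- $(2)\Leftrightarrow(3)$ is the standard finite-type tilting correspondence (essentially \cite[Theorem 13.49]{GT12} together with Theorem \ref{f-tilt}), and $(1)\Leftrightarrow(3)$ is where the real content lives, mediated by the Koszul-type modules $S_{I,i+1}$ --- but it is an outline rather than a proof. The entire substance of $(1)\Leftrightarrow(3)$ is concentrated in your final paragraph, where every hard claim is asserted rather than argued: that the filters $\g_i=\{I \text{ f.g.} \mid S_{I,i+1}\in\mathcal{R}\}$ (suitably saturated) satisfy the transitivity axiom, that $\g_i\subseteq\g_{i+1}$, and above all that the two assignments are mutually inverse. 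The last of these is the theorem's core difficulty: one must show that \emph{every} resolving subcategory of mod-$R$ of projective dimension at most $n$ is generated, as a resolving subcategory, by $R$ together with modules of the form $S_{I,i+1}$, and dually that $\bigcap_{i}\bigcap_{I\in\g_i^f}(S_{I,i+1})^{\bot}$ really equals $\bigcap_i\bigcap_{I\in\g_i^f}\Ker\Tor_i^R(R/I,-)$. In \cite{HS19} this occupies several sections and rests on results such as their Propositions 3.9 and 3.13 (which the present paper uses elsewhere); none of that is reproduced or replaced in your sketch.

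There is also a concrete misstep at the end: you say the problem reduces to showing that $\mathcal{R}$ ``is determined by the cyclic modules $R/I$ it contains together with their iterated syzygies.'' But $\mathcal{R}$ consists of modules of finite projective dimension, whereas the ideals $I\in\g_i$ typically have $R/I$ of infinite projective dimension, so $R/I$ need not lie in $\mathcal{R}$ at all --- this is precisely why the correspondence is routed through the duals of truncated Koszul complexes $S_{I,i+1}$ rather than through the cyclics. Your reverse map (testing $S_{I,i+1}\in\mathcal{R}$) is the right one, but the reduction you describe in the closing sentence would not go through as stated. As a citation of a known theorem the statement needs no proof in this paper; as a proof attempt, yours is a correct table of contents for the argument in \cite{HS19} with the chapters left blank.
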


\section{main results}

In this section, we give the main results of this paper and some applications.
\begin{theorem}\label{main}
Let $R$ be a commutative ring and $n\geq 0$. The following conditions are equivalent:
\begin{enumerate}
    \item   $\fPD(R)\leq n$,
    \item   any  tilting module is $n$-tilting,
     \item   any  cotilting module of cofinite type is $n$-cotilting,
     \item  any  finitely generated ideal $I$ that satisfies $\Ext_R^i(R/I,R)=0$ for each $i=0,\dots,n$ is $R$,
     \item  any  finitely generated ideal $I$ that satisfies $\VAss(\Omega^{-i}(R))\cap \V(I)=0$ for each $i=0,\dots,n$ is $R$.
\end{enumerate}
\end{theorem}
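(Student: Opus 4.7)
The plan is to establish the chain $(1)\Leftrightarrow(2)\Leftrightarrow(3)\Leftrightarrow(4)$, using Theorem \ref{f-tilt} and Theorem \ref{corr} to translate between the finitistic dimension, tilting classes, finite-type Gabriel topologies, and vaguely associated primes of cosyzygies.

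For $(1)\Rightarrow(2)$, take any tilting module $T$; by Theorem \ref{f-tilt}, $T^{\bot}=\St^{\bot}$ for a set $\St$ of strongly finitely presented modules of projective dimension at most $\pd_R T$. Each element of $\St$ has finite projective dimension and lies in mod-$R$, hence belongs to $\FPR$, so $(1)$ bounds its projective dimension by $n$. Applying Theorem \ref{f-tilt} again shows that $T^{\bot}$ is an $n$-tilting class, and since the projective dimension of a tilting module is determined by its tilting class up to equivalence, $T$ itself is $n$-tilting. Conversely, for $(2)\Rightarrow(1)$, given $M\in\FPR$ form the smallest resolving subcategory $\St_M$ of mod-$R$ containing $M$; it consists of modules of projective dimension at most $\pd_R M$ and corresponds via Theorem \ref{corr} to a tilting class, which by $(2)$ is $n$-tilting. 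The correspondence then identifies $\St_M$ with a resolving subcategory of modules of projective dimension at most $n$, forcing $\pd_R M\leq n$.

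For $(2)\Leftrightarrow(3)$, we apply Theorem \ref{corr} at level $n+1$: an $(n+1)$-tilting class corresponds to a sequence $(\g_0,\ldots,\g_n)$ of finite-type Gabriel topologies satisfying (a) and (b), and is already $n$-tilting precisely when the top Gabriel topology $\g_n$ is trivial. Triviality of $\g_n$ is detected on its finitely generated members, and condition (b) at $i=n$ requires each $I\in\g_n^f$ to satisfy $\Ext^j_R(R/I,R)=0$ for $j=0,\ldots,n$; hence ``every tilting module is $n$-tilting'' unpacks exactly to (3). In the direction $(3)\Rightarrow(2)$ we must also verify that the collection of finitely generated ideals satisfying this Ext-vanishing forms a finite-type Gabriel topology, so that a hypothetical counterexample ideal $I$ can be embedded into a non-trivial $\g_n$. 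Finally, $(3)\Leftrightarrow(4)$ is a direct substitution via the Hrbek equivalence for finitely generated ideals,
$$\Ext^j_R(R/I,R)=0 \Longleftrightarrow \VAss(\Omega^{-j}(R))\cap \V(I)=\emptyset,$$
applied termwise for $j=0,\ldots,n$.

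The main obstacle will be the Gabriel-topology closure check inside $(3)\Rightarrow(2)$: from a single counterexample ideal $I$ one must produce a genuine finite-type Gabriel topology $\g_n$ whose finitely generated members all satisfy the Ext-vanishing of condition (b). Verifying that $\{J\text{ finitely generated}:\Ext^j_R(R/J,R)=0,\ j=0,\ldots,n\}$ is stable under the operations $J\mapsto (J:t)$ and under the second Gabriel axiom requires a careful use of the long exact sequences of $\Ext_R^*(-,R)$; once this is done, the remaining translations are a routine application of Theorems \ref{f-tilt} and \ref{corr} together with the dictionary of vaguely associated primes from \cite{H16}.
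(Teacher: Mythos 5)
Your handling of $(1)\Leftrightarrow(2)$, of the direction $(3)\Rightarrow(2)$, and of $(3)\Leftrightarrow(4)$ is essentially sound. In fact your route for $(1)\Rightarrow(2)$ --- pass to the set $\St$ of strongly finitely presented generators given by Theorem \ref{f-tilt}, observe that each $S\in\St$ lies in $\FPR$ so that $(1)$ forces $\pd_RS\leq n$, and apply Theorem \ref{f-tilt} again --- is a legitimate and arguably cleaner alternative to the paper's argument, which instead takes the $\A^{<\infty}$-filtered equivalent tilting module from \cite[Theorem 13.46]{GT12} and invokes the Auslander Lemma. Your $(3)\Rightarrow(2)$ (unpack the sequence $(\g_0,\dots,\g_{m-1})$ of Theorem \ref{corr}, use finite type and condition (b) to conclude $\g_i=\{R\}$ for $i\geq n$) is exactly the paper's argument, modulo the fact that you should run it at every level $m>n$, not only at level $n+1$.

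The genuine gap is that you never complete any implication from $(1)$ or $(2)$ back to $(3)$, so the four conditions are not actually linked into a cycle. You reduce $(2)\Rightarrow(3)$ (which you mislabel as $(3)\Rightarrow(2)$) to the claim that a single proper finitely generated ideal $I$ with $\Ext_R^j(R/I,R)=0$ for $j=0,\dots,n$ can be placed inside the top topology $\g_n$ of a legitimate sequence for Theorem \ref{corr}, and you defer the verification to ``a careful use of the long exact sequences of $\Ext$.'' That deferred verification is the entire content of the implication, and it is not routine: condition (b) must hold for \emph{every} member of the Gabriel topology generated by $I$ (in particular for all finitely generated $J\supseteq I^k$), and neither upward monotonicity of the $\Ext$-vanishing in the ideal nor its stability under products and under $J\mapsto(J:t)$ falls out of a long exact sequence in any direct way --- e.g.\ the sequence $0\to R/(J:t)\xrightarrow{\,t\,}R/J\to R/(J+tR)\to 0$ introduces $\Ext^{j+1}_R(R/(J+tR),R)$, which you do not control. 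If you want to salvage this route you should first pass to condition $(4)$ via \cite[Proposition 3.13]{HS19}, where the required closure properties become transparent ($\V(IJ)=\V(I)\cup\V(J)$ and $\V((J:t))\subseteq\V(J)$), and you must still check that (b) on a finitely generated basis suffices in Theorem \ref{corr}. The paper avoids all of this by proving $(1)\Rightarrow(3)$ directly: from the Koszul complex of generators of $I$ and \cite[Proposition 3.9]{HS19} it manufactures a module $S_{I,n+1}\in\FPR$ with $\pd_RS_{I,n+1}\leq n+1$; the hypothesis $\fPD(R)\leq n$ then forces the first dualized differential $d_1^{\ast}\colon R\to R^m$ to split, which says precisely that $1\in I$. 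Without either that Koszul argument or a fully executed Gabriel-topology construction, your proof is incomplete.
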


\begin{proof}
$(1)\Rightarrow (2)$:  Let $T$ be an $m$-tilting module with $m\geq n$ and $(\A,\B)$ be the cotorsion pair induced by $T$.  By \cite[Theorem 13.46]{GT12}, there is an $\A^{<\infty}$-filtered $m$-tilting module $T_{fin}$ such that $T_{fin}$ is equivalent to $T$, where  $\A^{<\infty}$ denotes the subclass mod-$R\cap \A$ of $\A$. By \cite[Lemma 13.10]{GT12}, every $R$-module in $\A$ has a projective dimension at most $m$.  It follows that $T_{fin}$ is $\Proj^{\leq m}$-filtered. Since $\fPD(R)\leq n$  and $m\geq n$, we have $\Proj^{\leq m}=\Proj^{\leq n}$. Thus $T_{fin}$ is $\Proj^{\leq n}$-filtered. By Auslander Lemma, $\pd_RT=\pd_RT_{fin}\leq n$ and thus $T$ is $n$-tilting.

$(2)\Rightarrow (1)$: Let $M\in \FPR$ and $(\A,\B)$ be the cotorsion pair induced by $M$. By Theorem \ref{f-tilt}, $(\A,\B)$ is a tilting cotorsion pair induced by a tilting module $T$. By (2), $T$ is $n$-tilting. Since $M\in \A$, $M$ has a projective dimension at most $n$ (see \cite[Lemma 13.10]{GT12}). Since $M\in \FPR$, we have $M\in \Proj^{\leq n}$.

$(2)\Leftrightarrow (3)$: It follows by \cite[Theorem 15.18]{GT12}.

$(1)\Rightarrow (4)$: Let $I=\langle a_1,\dots,a_m\rangle$ be a finitely generated ideal satisfying $\Ext_R^i(R/I,R)=0$ for all $i=0,\dots,n$. Let $K_{\cdot}(I)$ be the Koszul complex induced by $\{a_1,\dots,a_m\}$ as follows:
$$\dots \xrightarrow{d_{n+2}} F_{n+1}\xrightarrow{d_{n+1}} F_n\xrightarrow{d_{n}} \dots \xrightarrow{d_{2}} F_1 \xrightarrow{d_{1}} F_0 \rightarrow 0.$$
Note that $F_0=R$, $F_1=R^m$ and $d_1(x_1,\dots,x_m)=\sum_{i=1}^ma_ix_i$. Denote $S_{I,n+1}$  to be the cokernel of the map $d^{\ast}_{n+1}:=\Hom_R(d_{n+1}, R)$.  Since $\Ext_R^i(R/I,R)=0$ for each $i=0,\dots,n$, by \cite[Proposition 3.9]{HS19}, we have the following exact sequence:

$$0\rightarrow  F^{\ast}_{0}\xrightarrow{d^{\ast}_{1}} F^{\ast}_1\xrightarrow{d^{\ast}_{2}} \dots \xrightarrow{d^{\ast}_{n}} F^{\ast}_n\xrightarrow{d^{\ast}_{n+1}} F^{\ast}_{n+1} \rightarrow S_{I,n+1}\rightarrow  0 $$
where $F^{\ast}_i=\Hom_R(F_i, R)$ and $d^{\ast}_{i}=\Hom_R(d_{i}, R)$ for each $i$. Thus $\pd_R(S_{I,n+1})\leq n+1$. Since  $\fPD(R)\leq n$, we have $\pd_RS_{I,n+1}\leq n$. Thus  $d^{\ast}_{1}: R^{\ast}\rightarrow (R^m)^{\ast}$ is a split monomorphism. Since the map $d_{1}: R^m\rightarrow R$ satisfies $d_{1}(x_1,\dots,x_m)=\displaystyle\sum_{i=1}^m x_ia_i$, we have $d^{\ast}_{1}(r)=(ra_1,\dots,ra_m)$ under the natural isomorphisms $R^{\ast}\cong R$ and $(R^m)^{\ast}\cong R^m$. Then there exists a homomorphism $g:R^m\rightarrow R$ such that $gd^{\ast}_{1}=\Id_R$. Thus $\displaystyle\sum_{i=1}^m g(e_i)a_i=1$ where $\{e_1,\dots,e_m\}$ is the standard basis for $R^n$. Consequently, we have $I=R$.

$(4)\Rightarrow (2)$: Let $\T$ be a tilting class induced by an $m$-tilting module $T$ and  $(\g_0,\g_1,\dots,\g_{m-1})$ the corresponding sequence of Gabriel topologies (see Theorem \ref{corr}). If $m\leq n$, $(2)$ holds  obviously. Suppose $m> n$.  Let $I$ be a finitely generated ideal in $\g_{n}$. We have $\Ext_R^i(R/I,R)=0$ for $i=0,\dots,n$, and then $I=R$ and thus $\g_{m-1}=\dots=\g_{n}=\{R\}$. Cosequently, the corresponding tilting class $\T=\displaystyle\bigcap_{i=0}^{m-1}\displaystyle\bigcap_{I\in \g_i^{f}}\Ker\Tor^R_i(R/I,-)=\displaystyle\bigcap_{i=0}^{n-1}\displaystyle\bigcap_{I\in \g_i^{f}}\Ker\Tor^R_i(R/I,-)$ is induced by $(\g_0,\g_1,\dots,\g_{n-1})$ and thus an $n$-tilting class by Theorem \ref{corr}. Consequently, $T$ is $n$-tilting.

$(4)\Leftrightarrow (5)$: It follows from \cite[Proposition 3.13]{HS19}.
\end{proof}

Let $R$ be a Noetherian ring and $I$  a finitely generated proper ideal of $R$. Following \cite{BH93}, the common length of the maximal $R$-sequences in $I$, denoted by $\grade(I,R)$, is said to be the grade of $I$ on $R$. A well-known result is that $\grade(I,R)=\min\{i\geq 0|\Ext_R^i(R/I,R)\not=0\}$. If $(R,\m)$ is a Noetherian local ring, then $\grade(\m,R)$  is said to be the depth of $R$.  Auslander and Buchsbaum \cite{AB58} showed that the small finitistic dimension  $\fPD(R)$ of $R$  is equal to the depth of $R$. Now we extend this classical result to general Noetherian rings.

\begin{proposition}\label{nof}
Let $R$ be a Noetherian ring. Then $\fPD(R)= \sup\{\grade(\m,R)|\m\in \Max(R)\}$.
\end{proposition}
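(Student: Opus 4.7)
The plan is to deduce this directly from the $(1) \Leftrightarrow (3)$ equivalence in Theorem \ref{main}, together with the Ext-characterization of grade that is available over Noetherian rings. Set $d := \sup\{\grade(\m,R)\mid \m\in\Max(R)\}$. I aim to show that $\fPD(R)\leq n$ if and only if $d\leq n$, which yields the desired equality (including the case $d=\infty$).

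First I would recall that since $R$ is Noetherian, every ideal is finitely generated and, for any proper ideal $I$, one has $\grade(I,R)=\min\{i\geq 0\mid \Ext_R^i(R/I,R)\neq 0\}$. In particular, for a proper finitely generated ideal $I$, the condition $\Ext_R^i(R/I,R)=0$ for all $i=0,1,\ldots,n$ is equivalent to $\grade(I,R)\geq n+1$. Trivially, if $I=R$ then $R/I=0$ and every $\Ext$ vanishes. So Theorem \ref{main}$(1)\Leftrightarrow(3)$ translates, in the Noetherian setting, into the statement: $\fPD(R)\leq n$ if and only if no proper ideal of $R$ has grade exceeding $n$, i.e.\ if and only if $\grade(I,R)\leq n$ for every proper ideal $I$.

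Next I would use monotonicity of grade: if $I\subseteq J$ then any $R$-regular sequence contained in $I$ is contained in $J$, so $\grade(I,R)\leq \grade(J,R)$. Because every proper ideal is contained in some maximal ideal $\m$, this gives
\[
\sup\{\grade(I,R)\mid I\subsetneq R\}=\sup\{\grade(\m,R)\mid \m\in\Max(R)\}=d.
\]
Therefore the condition ``$\grade(I,R)\leq n$ for every proper ideal $I$'' is equivalent to $d\leq n$. Combining with the previous paragraph, $\fPD(R)\leq n$ if and only if $d\leq n$, which forces $\fPD(R)=d$.

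There is no real obstacle here; the whole argument is essentially a bookkeeping exercise once one has Theorem \ref{main}(3) and the Ext-formula for grade. The only point needing a moment of care is the infinite case: if $d=\infty$, then for each $n$ some maximal ideal $\m$ has $\grade(\m,R)>n$, so $\m$ is a proper finitely generated ideal with $\Ext_R^i(R/\m,R)=0$ for $i=0,\ldots,n$, whence $\fPD(R)>n$ by Theorem \ref{main}; letting $n\to\infty$ gives $\fPD(R)=\infty=d$, as required.
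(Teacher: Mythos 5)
Your argument is correct and is essentially the paper's own proof: both rest on Theorem \ref{main}$(1)\Leftrightarrow(3)$, the formula $\grade(I,R)=\min\{i\mid \Ext_R^i(R/I,R)\neq 0\}$, and the fact that every proper ideal lies in a maximal ideal so that grade is maximized at maximal ideals. You merely package the two inequalities as a single ``$\fPD(R)\leq n$ iff $d\leq n$'' equivalence, whereas the paper splits into the infinite and finite cases; the content is the same.
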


\begin{proof}  Suppose $\fPD(R)$ is infinite. By Theorem \ref{main}, for any $n\geq 0$, there exists a proper ideal $I$ such that $\Ext_R^i(R/I,R)=0$ for each $i=0,\dots,n$. Let $\m$ be the maximal ideal containing $I$. Then $\grade(\m,R)\geq \grade(I,R)\geq n$. Thus $\sup\{\grade(\m,R)|\m\in \Max(R)\}$ is infinite. Suppose $\fPD(R)=n$ for a non-negative integer $n$.  Then, by Theorem \ref{main}, there exists a proper ideal $I$ of $R$ such that $\Ext_R^i(R/I,R)=0$ for each $i=0,\dots,n-1$. Let $\m$ be the maximal ideal containing $I$. Then  we have
$\grade(\m,R)\geq \grade(I,R)\geq n$. If there is a maximal ideal $\m'$ such that $\grade(\m',R)>n$, then $\Ext_R^i(R/\m',R)=0$ for all $i=0,\dots,n$. Thus $\fPD(R)>n$ by Theorem \ref{main} which is a contradiction.
\end{proof}

A Noetherian local ring $(R,\m)$ is said to be a Cohen-Macaulay ring provided that $\grade(\m,R)=K.\dim(R)$ where $K.\dim(R)$ denotes the Krull dimension of $R$.  In general, a Noetherian ring  $R$ is said to be a Cohen-Macaulay ring provided that $R_{\m}$ is Cohen-Macaulay for each $\m\in \Max(R)$. For a  Noetherian ring $R$, Raynaud and Gruson \cite{RG71} proved that the big finitistic dimension  $\FPD(R)$ of $R$ coincides with the Krull dimension $K.\dim(R)$ of $R$.  Thus by Proposition \ref{nof}, we have the following result.
\begin{corollary}\label{cm}
Let $R$ be a Cohen-Macaulay ring. Then  $\fPD(R)=\FPD(R)$. In this case, $\fPD(R)=\FPD(R)=K.\dim(R)$.
\end{corollary}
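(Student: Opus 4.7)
The plan is to chain together three facts: Proposition \ref{nof}, the Raynaud–Gruson identity $\FPD(R)=K.\dim(R)$ quoted just before the corollary, and the defining property of a Cohen–Macaulay ring. Since Proposition \ref{nof} already expresses $\fPD(R)$ as a supremum of grades over the maximal spectrum, the entire task reduces to matching this supremum with $K.\dim(R)$.

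First I would rewrite $\grade(\m,R)$ in local terms. For any maximal ideal $\m$, the module $R/\m$ is $\m$-torsion, so $\Ext_R^i(R/\m,R)$ is annihilated by some power of $\m$ and therefore coincides with its localization $\Ext_{R_\m}^i(R_\m/\m R_\m,R_\m)$. Consequently $\grade(\m,R)=\grade(\m R_\m,R_\m)=\depth(R_\m)$. Next, invoking the Cohen–Macaulay hypothesis locally gives $\depth(R_\m)=K.\dim(R_\m)$ for every $\m\in\Max(R)$.

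Substituting into Proposition \ref{nof} yields
\[
\fPD(R)=\sup\{\grade(\m,R)\mid \m\in\Max(R)\}=\sup\{K.\dim(R_\m)\mid \m\in\Max(R)\}=K.\dim(R).
\]
Combined with the Raynaud–Gruson equality $\FPD(R)=K.\dim(R)$ and the general inequality $\fPD(R)\leq \FPD(R)$ noted in the introduction, this forces $\fPD(R)=\FPD(R)=K.\dim(R)$.

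The only nonroutine point is the identification $\grade(\m,R)=\depth(R_\m)$; one must be a little careful because the analogous statement can fail for non-maximal ideals, but for a maximal ideal it follows cleanly from the fact that $R/\m$ has support $\{\m\}$. After that, everything is bookkeeping with already-stated results, so I do not anticipate a serious obstacle.
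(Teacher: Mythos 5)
Your proposal is correct and follows exactly the route the paper intends: the paper gives no written proof beyond "by Proposition \ref{nof}" and the Raynaud--Gruson equality $\FPD(R)=K.\dim(R)$, and your chain $\grade(\m,R)=\depth(R_\m)=K.\dim(R_\m)$ together with $\sup_{\m}K.\dim(R_\m)=K.\dim(R)$ is precisely the omitted bookkeeping. The localization step you flag as the only nonroutine point is handled correctly.
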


If $R$ is a Noetherian local ring, the converse of Corollary \ref{cm} always holds. However, the converse is not always true for general Noetherian rings.
\begin{example}
Let $R=\mathbb{Z}[pX, X^2, X^3]$ where $p$ is a fixed prime. It is easy to check that $R$ is a Noetherian ring with  $K.\dim(R)=2$, and thus $\FPD(R)=2$ (see \cite[Theorem 3.2.6]{RG71}). We denote $\m_q=\langle q, pX, X^2, X^3\rangle$ for each prime $q$, and then $\m_q$ is a maximal ideal of $R$. One can show that $\grade(\m_q,R)=1$ if $q=p$, and $\grade(\m_q,R)=2$ if $q\not=p$. Thus $\fPD(R)=2$ by Proposition \ref{nof}.  Note that  $K.\dim(R_{\m_q})=2$ for each prime $q$. It follows that $R_{\m_p}$ is not Cohen-Macaulay and thus $R$ is not a Cohen-Macaulay ring.
\end{example}

The following example shows that the small finitistic dimensions of Noetherian rings can be infinite.
\begin{example}
Let $R$ be the Nagata's bad Noetherian ring given in \cite[Appendix,
Example 1]{N62}. Namely, let $D=k[x_1,\dots,x_n,\dots]$ be the polynomial ring with countably infinite variables over a field $k$. Let $\p_i=\langle x_{2^{i-1}},x_{2^{i-1}+1},\dots, x_{2^i-1}\rangle$ for $i\geq 1$ which is a prime ideal of $D$. Let $S$ be the multiplicative subset$$S=\bigcap_{i\geq 1}(R-\p_i).$$ Consider the ring $R=D_S$. By \cite[Section 02JC]{Stack}, $R$ is a Cohen-Macaulay ring with infinite Krull dimension. So $\fPD(R)=\infty$ by Corollary \ref{cm}.
\end{example}

Recall from \cite{L93},  an ideal $I$ of $R$ is said to be dense if $\Ann(I):=\{r\in R|Ir=0\}=0$, and  semi-regular if it contains a finitely generated dense sub-ideal.  Recall from \cite{wzcc20}, a ring $R$ is said to be a $\DQ$ ring provided that the only finitely generated semi-regular ideal of $R$ is $R$ itself. Note that a finitely generated ideal $I$ is semi-regular if and only if $\Hom_R(R/I,R)=0$. Thus a ring $R$ is $\DQ$ if and only if  any finitely generated ideal $I$ satisfying $\Hom_R(R/I,R)=0$ is $R$. Thus the following result can be deduced from Theorem \ref{main} in the case of $n=0$ .

 \begin{corollary}\cite[Proposition 2.2]{fkxs20}\label{DQ}
Let $R$ be a commutative ring. Then $\fPD(R)=0$ if and only if $R$ is a $\DQ$ ring.
\end{corollary}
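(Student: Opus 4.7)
The plan is to specialize Theorem \ref{main} to the case $n=0$ and then reconcile the resulting ideal-theoretic condition with the very definition of a $\DQ$ ring, using the identification of semi-regularity with the vanishing of a certain Hom group that the preceding paragraph has already recorded.

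First, I would apply Theorem \ref{main} with $n=0$. Since projective dimensions are nonnegative, $\fPD(R)=0$ is equivalent to $\fPD(R)\leq 0$, and Theorem \ref{main} $(1)\Leftrightarrow (3)$ then gives that this holds if and only if every finitely generated ideal $I$ satisfying $\Ext_R^0(R/I,R)=0$ must be equal to $R$.

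Next, I would translate the $\Ext^0$ condition into semi-regularity. Since $\Ext_R^0(R/I,R)=\Hom_R(R/I,R)\cong \Ann(I)$, the hypothesis $\Ext_R^0(R/I,R)=0$ is just $\Ann(I)=0$, i.e.\ the denseness of $I$. For a finitely generated ideal $I$, denseness coincides with semi-regularity (one may take $I$ itself as the finitely generated dense subideal), so the condition in Theorem \ref{main}(3) at $n=0$ reads precisely as: every finitely generated semi-regular ideal of $R$ equals $R$. By the definition of a $\DQ$ ring recalled immediately before the corollary, this is exactly the $\DQ$ property.

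Combining the two observations yields the desired equivalence, and no real obstacle arises: the paragraph preceding the corollary has already supplied the translation $\Hom_R(R/I,R)=0\,\Leftrightarrow\, I$ semi-regular for finitely generated $I$, so the corollary is a direct specialization of Theorem \ref{main} to $n=0$.
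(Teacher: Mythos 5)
Your proposal is correct and follows exactly the paper's route: the paper likewise notes that a finitely generated ideal $I$ is semi-regular if and only if $\Hom_R(R/I,R)=0$, rephrases the $\DQ$ property accordingly, and deduces the corollary from Theorem \ref{main} with $n=0$. Your additional identification $\Hom_R(R/I,R)\cong\Ann(I)$ and the check that denseness equals semi-regularity for finitely generated ideals just make explicit what the paper leaves implicit.
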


Following from \cite{fk16},  a finitely generated ideal $J$ of $R$ is said to be a $\GV$-ideal if and only if $\Hom_R(R/J,R)=\Ext_R^1(R/J,R)=0$. The set of all $\GV$-ideals of $R$ is denoted by $\GV(R)$. A ring $R$ is said to be a $\DW$ ring provided that  $\GV(R)=\{R\}$. Examples of $\DW$ rings contain rings of Krull dimension equal to 0, integer domains of  Krull dimension at most $1$, \Prufer\ domains and so on.   Wang et al. \cite[Theorem 3.2.]{fkxs20} showed that a commutative ring $R$ with  $\fPD(R)\leq 1$ is a $\DW$ ring. Obviously, Theorem \ref{main} means that it is actually an equivalence.

\begin{corollary}\label{DW}
Let $R$ be a commutative ring. Then $\fPD(R)\leq 1$ if and only if $R$ is a $\DW$ ring.
\end{corollary}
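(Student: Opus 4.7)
The plan is to derive this corollary as a direct specialization of Theorem \ref{main} to the case $n=1$, matching the hypothesis of condition (3) with the very definition of a $\GV$-ideal. Since both implications of the biconditional are to be established, I will read off the equivalence $(1) \Leftrightarrow (3)$ from Theorem \ref{main} and then translate (3) into the $\DW$ condition.

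First, I would apply Theorem \ref{main} with $n = 1$. Condition (3) becomes: every finitely generated ideal $I$ satisfying $\Ext_R^0(R/I, R) = 0$ and $\Ext_R^1(R/I, R) = 0$ must equal $R$. The key observation is that $\Ext_R^0(R/I, R) = \Hom_R(R/I, R)$, so this condition reads: every finitely generated ideal $I$ with $\Hom_R(R/I, R) = \Ext_R^1(R/I, R) = 0$ is $R$. By the definition of a $\GV$-ideal recalled from \cite{fk16}, this is precisely the statement that $\GV(R) = \{R\}$, i.e., $R$ is a $\DW$ ring.

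Hence $\fPD(R) \leq 1 \Leftrightarrow R$ is a $\DW$ ring, as desired. There is no real obstacle; the work has already been done in Theorem \ref{main}, and the corollary just records the fact that the characterizing condition for $n = 1$ coincides verbatim with the classical definition of the $\GV$-ideal and $\DW$ ring. I would also briefly note that the forward direction recovers \cite[Theorem 3.2]{fkxs20} of Wang et al., while the converse — which prior to Theorem \ref{main} was an open direction — is the new content obtained here.
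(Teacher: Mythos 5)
Your proposal is correct and is exactly the argument the paper intends: specialize Theorem \ref{main} to $n=1$ and observe that condition (3) with $\Ext_R^0=\Hom_R$ is verbatim the statement $\GV(R)=\{R\}$, i.e., that $R$ is $\DW$. The paper states this corollary as an immediate consequence of the theorem without further proof, so there is nothing to add.
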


Recall that a commutative ring $R$ is said to be a strong \Prufer\ ring provided that every finitely generated semi-regular ideal is locally principal (see \cite{L93}). Recently, Wang et al. showed that the small finitistic dimensions of a connected strong \Prufer\ ring is at most $1$ in \cite[Theorem 2.4]{fqz21}. Now we extend this result to all strong \Prufer\ rings.

\begin{corollary}\label{spf}
Let $R$ be a strong \Prufer\ ring. Then $R$ is a $\DW$ ring. Consequently, $\fPD(R)\leq 1$.
\end{corollary}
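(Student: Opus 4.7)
My plan is to prove the first assertion---that a strong \Prufer\ ring $R$ is a $\DW$ ring---directly, and then to invoke Corollary \ref{DW} to conclude $\fPD(R)\leq 1$. Thus all the work lies in showing that every $\GV$-ideal $J$ of $R$ coincides with $R$.

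The first step is to feed the $\GV$-data into the strong \Prufer\ hypothesis. By definition $J$ is finitely generated with $\Hom_R(R/J,R)=0$, which makes $J$ a finitely generated semi-regular ideal; the strong \Prufer\ condition therefore forces $J$ to be locally principal. Consequently it suffices to verify that $J_\m=R_\m$ at every maximal ideal $\m$ of $R$.

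The second step is to transfer the full $\GV$-conditions to $R_\m$. Since $J$ is finitely generated, $R/J$ is finitely presented, so both $\Hom_R(R/J,R)$ and $\Ext_R^1(R/J,R)$ commute with the flat base change to $R_\m$. Writing $J_\m=(a)$, the localized Hom-vanishing reads $\Ann_{R_\m}(a)=0$, so $a$ is a regular element of $R_\m$. From the resulting free resolution $0\to R_\m\xrightarrow{\cdot a}R_\m\to R_\m/(a)\to 0$ one computes $\Ext_{R_\m}^1(R_\m/(a),R_\m)\cong R_\m/(a)$, and the localized $\Ext^1$-vanishing then forces $(a)=R_\m$. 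Hence $J_\m=R_\m$ at every maximal $\m$, and so $J=R$.

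I do not expect a genuine obstacle in this argument. The only routine checks are that $R/J$ is finitely presented (so that Hom and $\Ext^1$ really commute with localization at $\m$) and that a locally principal finitely generated ideal does localize to a principal ideal at each $R_\m$; both are standard and cause no difficulty.
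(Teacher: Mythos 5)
Your overall strategy---reducing to $J_\m=R_\m$ for every $J\in\GV(R)$, using the strong \Prufer\ hypothesis to make $J$ locally principal on a regular element, and then localizing the two defining conditions of a $\GV$-ideal---is viable and genuinely different from the paper's proof, which instead concludes from local freeness that $J$ is flat and then quotes \cite[Theorem 6.7.24, Exercise 6.10(1)]{fk16} to get $J=R$. However, there is a gap in your second step. Finite presentation of $R/J$ (equivalently, finite generation of $J$) does guarantee that $\Hom_R(R/J,R)$ commutes with localization, but it does \emph{not} guarantee this for $\Ext^1_R(R/J,R)$: from the presentation $0\to J\to R\to R/J\to 0$ one sees that the comparison map $\Ext^1_R(R/J,R)_\m\to \Ext^1_{R_\m}(R_\m/J_\m,R_\m)$ is in general only injective, and its surjectivity requires $J$ itself to be finitely presented (i.e.\ $R/J$ to be $2$-presented). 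Since the rings in this paper are not assumed coherent, a finitely generated ideal need not be finitely presented, so the vanishing of $\Ext^1_R(R/J,R)$ does not by itself yield the vanishing of $\Ext^1_{R_\m}(R_\m/J_\m,R_\m)$ on which your computation $\Ext^1_{R_\m}(R_\m/(a),R_\m)\cong R_\m/(a)$ depends.

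The gap is fillable, and the missing observation is essentially the one the paper uses: since each local generator is regular, $J_\p\cong R_\p$ is free of rank one for every prime $\p$, so $J$ is a finitely generated flat module of constant rank one and is therefore projective, hence finitely presented. With that in hand, $R/J$ is strongly finitely presented, your localization of $\Ext^1$ is legitimate, and the rest of your argument goes through; alternatively, once $J$ is known to be flat one can finish at once as the paper does. In any case, the $\Ext^1$ localization should not be presented as a routine consequence of the finite presentation of $R/J$ alone.
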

\begin{proof} Let $J\in \GV(R)$. Then $J$ is a finitely generated semi-regular ideal of $R$. Since $R$ is strong \Prufer, for any $\p\in \Spec(R)$,  there exists a regular element $ \frac{a}{b}\in J_{\p}$ such that $J_{\p}=\langle \frac{a}{b}\rangle$.  Therefore $J_{\p}$ is free over $R_{\p}$ for any $\p\in \Spec(R)$. Thus $J$ is flat, we have $J=R$ (see \cite[Theorem 6.7.24]{fk16} and  \cite[Exercise 6.10(1)]{fk16} ).
\end{proof}

Recall from \cite{HH78} that a prime ideal $\p$ of  an integral domain $R$ is said to be strongly prime  if
whenever $xy\in \p$, where $x, y \in K$ and $K$ is the quotient field of $R$, we have either $x\in \p$ or  $y\in \p$. An integral domain $R$ is called a pseudo-valuation domain (PVD for short)  if every prime ideal of $R$ is strongly prime. Wang et al.\cite{fkx20} computed the finitistic weak dimensions of PVDs using pullbacks of commutative rings.  Recall from  \cite{DF82} that an integral domain $R$ is called a locally pseudo-valuation domain (LPVD for short) if $R_{\m}$ is a PVD for each maximal ideal $\m$ of $R$. Certainly, PVDs are LPVDs. Recently, Xie \cite{x21} pointed out that LPVDs are $\DW$ rings (see also \cite[Theorem 2.9]{M05}, \cite[Theorem 11.8.5]{fk16} and \cite[Lemma 3.1.3]{zhou17}). Thus we have the following result.

\begin{corollary}\label{lpvd}
Let $R$ be an \LPVD, then  $\fPD(R)\leq 1$.
\end{corollary}

Recall from \cite{G69} that a commutative ring $R$ is said to be a \Prufer\ ring provided that every finitely generated regular ideal is invertible. Obviously, every total ring of quotients (i.e. any regular element is invertible) is \Prufer. In \cite[Problem 1]{CFFG14}, Cahen et al. posed the following two open questions:
\begin{itemize}
\item {\bf Problem 1a:} Let $R$ be a \Prufer\ ring. Is $\fPD(R)\leq 1$?
\item {\bf Problem 1b:} Let $R$ be a total ring of quotients. Is $\fPD(R)=0$?
\end{itemize}
Very recently, Wang et al. \cite{wzcc20,fkxs20} obtained a total ring of quotients $R$ but not a \DW\ ring, and thus $\fPD(R)>1$ getting a negative answer to these two open questions. The next example shows that, for any $n\in \mathbb{N}$, there exists a  total ring of quotients $R$ satisfying $\fPD(R)=n$.

\begin{example}\label{exa-fpd-n} Let $D=k[x_1,\dots,x_n]$ be a polynomial rings with $n$ variables over a field $k$.  Set $\m=\langle x_1,\dots,x_n\rangle$ be a maximal ideal of $D$ and $\Pj=\Max(R)-\{\m\}$. Define $R=D(+)B$ to be the idealization constructed prior to \cite[Theorem 11]{L93}, where $B=\bigoplus\limits_{\p\in\Pj}D/\p$. Then $R$ is a total ring of quotients by \cite[Theorem 11(a)]{L93}.  By \cite[Theorem 11(c)]{L93}, the set of all semi-regular ideals of $R$ is $\{J(+)B\mid J$ is an ideal of $D$ and $\sqrt{J}=\m\}$. Let $J$ be an ideal of $D$ satisfying $\sqrt{J}=\m$ and $I=J(+)B$. Then, for any $i\geq 0$,  $\Ext_R^i(R/I,R)\cong \Ext_R^i(D/J,R)\cong \Ext_R^i(D/J\otimes_DR,R)$ since $J+\p=R$ for any $\p\in \Pj$. By localizing at all maximal ideals, it is easy to verify $\Tor_i^D(D/J,R)=0$ for any $i\geq 1$.
By \cite[Chapter VI, Proposition 4.1.3]{CE99}, we have $\Ext_R^i(D/J\otimes_DR,R)\cong \Ext_D^i(D/J,R)=\Ext_D^i(D/J,D(+)B)$  for any $i\geq 1$. By localizing at all maximal ideals, one can also verify $\Ext^i_D(D/J,B)=0$  for any $i\geq 0$ (note that $\Hom_R(R/I,R)\cong \Hom_D(D/J,D)=0$). Thus we have $\Ext_R^i(R/I,R)\cong \Ext_D^i(D/J,D)$  for any $i\geq 0$. By Theorem \ref{main} and \cite[Proposition 1.2.10]{BH93}, we have $\fPD(R)=\grade(J,D)=\grade(\m,D)=K.\dim(D_{\m})=n$ since $D_{\m}$ is a regular local ring.
\end{example}

\bigskip

\end{document}